 \newtheorem{thm}{Theorem}[section]
 \newtheorem{cor}[thm]{Corollary}
 \newtheorem{lem}[thm]{Lemma}
 \newtheorem{defn}[thm]{Definition}
 \newtheorem{rem}[thm]{Remark}
 \newtheorem{ex}{Example}
\begin{document}

%
%
%
%
%
%
%
%
%

\title[Analytical Study of a Class of  Rational Difference
Equations]
 {Analytical Study of a Class of  Rational Difference
Equations \vspace{2cm}}

\keywords{Difference equations, Good set, Convergence, Unbounded
solutions, Periodic solutions}

\date{}
\author{Fethi Kadhi}

\address{%
University of Manouba\\
\'Ecole Nationale des Sciences de l'Informatique
Department of Mathematics\\
Saudi Arabia}

\email{fethi.kadhi@ensi-uma.tn}

\author{Malek Ghazel}
\address{University of Hail\\
Faculty of Science\\
Department of Mathematics\\
Saudi Arabia}
 \email{malek\_ghazel@yahoo.fr, m.ghazel.uoh.edu.sa}
\subjclass{34K05, 34K13, 34K20, 39A10}

\keywords{Difference equations, Equilibrium point, Global
stability, Hyperbolic, Repeller, Oscillatory, Periodic solutions,
Numerical simulation}

\date{}
\begin{abstract}
We obtain the solution of the fourth order difference equation $$
x_{n+1}=\frac{ \alpha x_{n-3}}{A+B x_{n-1}x_{n-3}}$$ with the
initial conditions; $x_{-3}=d,$ $x_{-2}=c,$ $x_{-1}=b,$ and
$x_{0}=a$ are arbitrary nonzero real numbers, $\alpha$, $A$ and
$B$ are arbitrary constants. The result is used to study the
convergence of solutions, the existence of unbounded solutions and
the convergence to  periodic solutions. We illustrate the results
by several numerical examples.
\end{abstract}
\vspace{2cm}
\maketitle
\section{Introduction} Difference equations arise in the study of
the evolution of dynamical systems. Their applications to various
fields are rapidly increasing. For example, they are frequently
used in numerical analysis to describe the different schemes of
approximation, \cite{k1}. Moreover, difference equations are of
interest in themselves, especially in view of remarkable analogy
with the theory of differential equations. According to this
analogy, there are two manners to treat a difference equation: The
first is to study the qualitative behavior of solutions, for
examples:

\noindent Camouzis and Ladas \cite{cala} considered the dynamics
of the third-order rational difference equation
$$x_{n+1}=\frac{\alpha+\beta x_n+\gamma x_{n-1}+\delta
x_{n-2}}{A+Bx_n+Cx_{n-1}+Dx_{n-2}}$$ with nonnegative parameters
$\alpha, \beta, \gamma,\delta, A,B,C,D$ and with arbitrary
nonnegative initial conditions $x_{-2}, x_{-1}, x_0$.\\
\noindent Elabbasy et all \cite{EL AB Elmetw & EM EL 1} studied
the qualitative behavior of the difference equations $$x_{n+1}=a
x_{n}-\frac{bx_{n}}{cx_{n}-dx_{n-1}}\:\mbox{ and }\:
x_{n+1}=\frac{\alpha x_{n-k}}{\beta+\gamma
\Pi_{i=0}^{k}x_{n-i}}.$$ \noindent The second manner to treat a
difference equation is to find the explicit formula of solutions.
Contrary to the linear case, there is no general method to find
such explicit solution. However,  Cinar \cite{C C 2} obtained the
solution of the difference equations
 $$x_{n+1}=\frac{x_{n-1}}{-1+ x_{n}x_{n-1}}\:\mbox{  and   } \: x_{n+1}=\frac{ax_{n-1}}{-1+ bx_{n}x_{n-1}}.$$
\noindent Elsayed \cite{EM EL 12} solved the equations
$$x_{n+1}=\frac{x_{n-3}}{\pm 1 \pm x_{n-1}x_{n-3}}$$

 In this paper, we solve the following class of difference
 equations:
\begin{equation}\label{e_1}
x_{n+1}=\frac{\alpha x_{n-3}}{A+B x_{n-1}x_{n-3}}
\end{equation}
with the initial data : $x_{-3}=d,$ $x_{-2}=c,$ $x_{-1}=b,$ and
$x_{0}=a$ are nonzero real numbers. We use the obtained result to
determine the forbidden set of initial conditions and to discuss
the convergence of solutions. Depending on coefficients $\alpha$
and $A$, the existence of unbounded solutions and the convergence
to periodic solutions are studied. Our results are confirmed by
numerical examples.
\section{ Solution of equation \eqref{e_1}}
The following theorem gives an analytical expression of the
solution of $\eqref {e_1}$.
\begin{thm} \label{Th_1}Let $(x_n)_{n=-3}^\infty$ be the solution of $\eqref {e_1}$,
then, for all $n\geq 2$
\begin{align}\label{e_2}x_{4n-3}&=\frac{d\alpha^n\prod_{p=0}^{n-2}\Big(A^{2p+2}+B bd\sum_{i=0}^{2p+1}A^i\alpha^{2p+1-i}\Big)}
{\prod_{p=0}^{n-1}\Big(A^{2p+1}+B
bd\sum_{i=0}^{2p}A^i\alpha^{2p-i}\Big)},\\
x_{4n-2}&=\frac{c\alpha^n\prod_{p=0}^{n-2}\Big(A^{2p+2}+B
ac\sum_{i=0}^{2p+1}A^i\alpha^{2p+1-i}\Big)}
{\prod_{p=0}^{n-1}\Big(A^{2p+1}+B
ac\sum_{i=0}^{2p}A^i\alpha^{2p-i}\Big)},\\
 \label{e_3}
x_{4n-1}&=\frac{b\alpha^n\prod_{p=0}^{n-1}\Big(A^{2p+1}+
Bbd\sum_{i=0}^{2p}A^i\alpha^{2p-i}\Big)}{\prod_{p=0}^{n-1}\Big(A^{2p+2}+B
bd\sum_{i=0}^{2p+1}A^i\alpha^{2p+1-i}\Big)},\\
\label{e5} x_{4n}&=\frac{a\alpha^n\prod_{p=0}^{n-1}\Big(A^{2p+1}+B
ac\sum_{i=0}^{2p}A^i\alpha^{2p-i}\Big)}
{\prod_{p=0}^{n-1}\Big(A^{2p+2}+B
ac\sum_{i=0}^{2p+1}A^i\alpha^{2p+1-i}\Big)}.\end{align}
\end{thm}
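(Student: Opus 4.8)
The plan is to collapse the fourth‑order equation \eqref{e_1} into a single first‑order M\"obius recursion by tracking the products $u_n:=x_{n-1}x_{n-3}$. Multiplying \eqref{e_1} by $x_{n-1}$ gives $x_{n+1}x_{n-1}=\dfrac{\alpha\,x_{n-1}x_{n-3}}{A+Bx_{n-1}x_{n-3}}$, i.e.
\[
u_{n+2}=\frac{\alpha u_n}{A+Bu_n},\qquad n\ge 0 .
\]
Hence $(u_n)$ splits into two independent orbits under the same map $t\mapsto \alpha t/(A+Bt)$: the even‑indexed orbit with seed $u_0=x_{-1}x_{-3}=bd$, and the odd‑indexed orbit with seed $u_1=x_0x_{-2}=ac$.

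First I would integrate the M\"obius recursion. Passing to reciprocals linearizes it, $1/u_{n+2}=(A/\alpha)(1/u_n)+B/\alpha$, and the usual solution of a first‑order linear recursion yields, for all $k\ge 0$ (empty sums read as $0$),
\[
u_{2k}=\frac{\alpha^{k}\,bd}{N_k},\qquad u_{2k+1}=\frac{\alpha^{k}\,ac}{M_k},
\]
where $N_k:=A^{k}+Bbd\sum_{i=0}^{k-1}A^{i}\alpha^{k-1-i}$ and $M_k:=A^{k}+Bac\sum_{i=0}^{k-1}A^{i}\alpha^{k-1-i}$, so $N_0=M_0=1$. A one‑line manipulation then produces the telescoping identities $A+Bu_{2k}=N_{k+1}/N_k$ and $A+Bu_{2k+1}=M_{k+1}/M_k$.

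Next I would reconstruct the $x_n$ by telescoping \eqref{e_1} itself. Writing it as $x_{m}=\dfrac{\alpha\,x_{m-4}}{A+Bu_{m-1}}$ and iterating in steps of four gives, e.g.,
\[
x_{4n-3}=\frac{d\,\alpha^{n}}{\prod_{j=1}^{n}\bigl(A+Bu_{4j-4}\bigr)},
\]
and likewise $x_{4n-2}=c\alpha^n/\prod_{j=1}^n(A+Bu_{4j-3})$, $x_{4n-1}=b\alpha^n/\prod_{j=1}^n(A+Bu_{4j-2})$, $x_{4n}=a\alpha^n/\prod_{j=1}^n(A+Bu_{4j-1})$. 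Since $4j-4$ and $4j-2$ are even while $4j-3$ and $4j-1$ are odd, in each case the relevant $u$‑indices run through an arithmetic progression of step $2$, so each product becomes $\prod N_{k+1}/N_k$ (resp.\ $\prod M_{k+1}/M_k$) over consecutive $k$'s of one fixed parity; this collapses to a quotient of products of the $N$'s (resp.\ $M$'s) at even versus odd indices. Re‑indexing the surviving factors by $p$ and absorbing the shift into the bound $\sum_{i=0}^{k-1}A^i\alpha^{k-1-i}$ reproduces exactly \eqref{e_2}--\eqref{e5}.

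The main obstacle is purely the bookkeeping in this last step: deciding which of the two orbits each $u_{4j-r}$ belongs to, pinning down the exact endpoints of the telescoping products (the asymmetry between the upper limits $n-1$ and $n-2$ in the statement is caused by the factor $N_0=M_0=1$ dropping out of the denominator for $x_{4n-3}$ and $x_{4n-2}$ but not for $x_{4n-1}$ and $x_{4n}$), and the cosmetic rewriting $k\mapsto 2p+1$ or $2p+2$ inside the sums. If a self‑contained argument is preferred, the derived formulas can instead be confirmed by (simultaneous) induction on $n$: assuming \eqref{e_2}--\eqref{e5} at stage $n-1$, substitute the expressions for $x_{4n-5}$ and $x_{4n-7}$ into \eqref{e_1}, simplify, and check that the extra factor indexed by $p=n-1$ is produced; the base case $n=2$ is a direct computation from \eqref{e_1} and the initial data.
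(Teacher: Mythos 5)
Your argument is correct, but it takes a genuinely different route from the paper. The paper proves the theorem by direct induction on $n$: it checks the base case $n=2$, substitutes the assumed formulas for $x_{4n-3}$ and $x_{4n-1}$ into \eqref{e_1}, and uses the identity $A\big(A^{2n}+Bbd\sum_{i=0}^{2n-1}A^i\alpha^{2n-1-i}\big)+Bbd\alpha^{2n}=A^{2n+1}+Bbd\sum_{i=0}^{2n}A^i\alpha^{2n-i}$ to absorb the new factor into the product, stating that the other three formulas follow similarly. Your derivation is instead constructive: the substitution $u_n=x_{n-1}x_{n-3}$ collapses \eqref{e_1} to the Riccati-type map $u_{n+2}=\alpha u_n/(A+Bu_n)$, which linearizes under $u\mapsto 1/u$; the resulting closed forms $u_{2k}=\alpha^k bd/N_k$ and $u_{2k+1}=\alpha^k ac/M_k$, together with $A+Bu_{2k}=N_{k+1}/N_k$ and $A+Bu_{2k+1}=M_{k+1}/M_k$, make the products in $x_m=\alpha x_{m-4}/(A+Bu_{m-1})$ telescope. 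I verified the parity bookkeeping (the indices $4j-4$ and $4j-2$ are even, $4j-3$ and $4j-1$ are odd, and the factor $N_0=M_0=1$ is exactly what turns the denominator bound $n-1$ into the numerator bound $n-2$ for $x_{4n-3}$ and $x_{4n-2}$), and it reproduces \eqref{e_2}--\eqref{e5} exactly. What your approach buys is an explanation of where the formulas come from rather than a mere verification, plus closed forms for the products $x_{n-1}x_{n-3}$ as a byproduct; what the paper's induction buys is brevity once the answer has been guessed. The only caveat, which you share with the paper, is that the manipulations presuppose that the solution is well defined and the $u_n$ and all denominators are nonzero, i.e.\ that the initial data lie in the good set.
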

\begin{proof}
By induction, we will prove the result for $x_{4n-3}$. For $n=2$,
it is easy to check that
$x_{5}=\frac{d\alpha^2\prod_{p=0}^{0}\Big(A^{2p+2}+B
bd\sum_{i=0}^{2p+1}A^i\alpha^{2p+1-i}\Big)}{\prod_{p=0}^{1}\Big(A^{2p+1}+B
bd\sum_{i=0}^{2p}A^i\alpha^{2p-i}\Big)}$. Let $n\geq 2$. Suppose
that the result holds at the step $n$ and let us prove the result
for the step $n+1$,
\begin{align*}
&x_{4(n+1)-3}=x_{4n+1}=\frac{\alpha x_{4n-3}}{A+B
x_{4n-1}x_{4n-3}}\\
&=\frac{ d\alpha^{n+1}\prod_{p=0}^{n-1}\Big(A^{2p+2}+B
bd\sum_{i=0}^{2p+1}A^i\alpha^{2p+1-i}\Big)}
{\prod_{p=0}^{n-1}\Big(A^{2p+1}+B
bd\sum_{i=0}^{2p}A^i\alpha^{2p-i}\Big)\Big[A\Big(A^{2n}+B
bd\sum_{i=0}^{2n-1}A^i\alpha^{2n-1-i}\Big)+B
bd\alpha^{2n}\Big]}\\
&=\frac{ d\alpha^{n+1}\prod_{p=0}^{n-1}\Big(A^{2p+2}+B
bd\sum_{i=0}^{2p+1}A^i\alpha^{2p+1-i}\Big)}
{\prod_{p=0}^{n-1}\Big(A^{2p+1}+B
bd\sum_{i=0}^{2p}A^i\alpha^{2p-i}\Big)\Big(A^{2n+1}+B
bd\Big(\sum_{i=0}^{2n-1}A^{i+1}\alpha^{2n-1-i}+
\alpha^{2n}\Big)\Big)}\\
&=\frac{ d\alpha^{n+1}\prod_{p=0}^{n-1}\Big(A^{2p+2}+B
bd\sum_{i=0}^{2p+1}A^i\alpha^{2p+1-i}\Big)}
{\prod_{p=0}^{n-1}\Big(A^{2p+1}+B
bd\sum_{i=0}^{2p}A^i\alpha^{2p-i}\Big)\Big(A^{2n+1}+B
bd\Big(\sum_{i=1}^{2n}A^{i}\alpha^{2n-i}+ \alpha^{2n}\Big)\Big)}\\
&= \frac{ d\alpha^{n+1}\prod_{p=0}^{n-1}\Big(A^{2p+2}+B
bd\sum_{i=0}^{2p+1}A^i\alpha^{2p+1-i}\Big)}
{\prod_{p=0}^{n}\Big(A^{2p+1}+B
bd\sum_{i=0}^{2p}A^i\alpha^{2p-i}\Big)}
\end{align*}
Similarly, we prove the other formulas.
\end{proof}
For every $n\in \mathbb{N}$, $\alpha$, $A$, $B$, $a$, and $b$,
denote
$$P_{p,A,\alpha}^{B,b,d}=\Big(A^{p}(A-\alpha+B bd)-B bd\alpha^{p}\Big).$$
The following corollary gives a simplified analytic expression of
the solution when $A\neq\alpha$.
\begin{cor}\label{c_1}
 If $A\neq\alpha$, then the subsequences of the solution of \eqref{e_1} can be written as:
$$x_{4n-3}=\frac{d\alpha^n (A-\alpha)\prod_{p=0}^{n-2}P_{2p+2,A,\alpha}^{B,b,d}}{\prod_{p=0}^{n-1}P_{2p+1,A,\alpha}^{B,b,d}},\qquad
x_{4n-2}=\frac{c\alpha^n
(A-\alpha)\prod_{p=0}^{n-2}P_{2p+2,A,\alpha}^{B,a,c}}{\prod_{p=0}^{n-1}P_{2p+1,A,\alpha}^{B,a,c}},$$
$$x_{4n-1}=\frac{b\alpha^n \prod_{p=0}^{n-1}P_{2p+1,A,\alpha}^{B,b,d}}{\prod_{p=0}^{n-1}P_{2p+2,A,\alpha}^{B,b,d}},\qquad
x_{4n}=\frac{a\alpha^n
\prod_{p=0}^{n-1}P_{2p+1,A,\alpha}^{B,a,c}}{\prod_{p=0}^{n-1}P_{2p+2,A,\alpha}^{B,a,c}}.$$
\end{cor}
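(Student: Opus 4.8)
The plan is to deduce Corollary \ref{c_1} directly from Theorem \ref{Th_1} by evaluating, under the hypothesis $A\neq\alpha$, the finite geometric sums that occur in \eqref{e_2}--\eqref{e5}. The one elementary fact needed is that
$$\sum_{i=0}^{k}A^i\alpha^{k-i}=\frac{A^{k+1}-\alpha^{k+1}}{A-\alpha}\qquad(k\geq 0),$$
which I would invoke with $k=2p$ and with $k=2p+1$.

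First I would show that each ``building block'' appearing in Theorem \ref{Th_1} collapses to a single $P$-term divided by $A-\alpha$. Indeed, by the identity above,
$$A^{2p+1}+Bbd\sum_{i=0}^{2p}A^i\alpha^{2p-i}=A^{2p+1}+Bbd\,\frac{A^{2p+1}-\alpha^{2p+1}}{A-\alpha}=\frac{A^{2p+1}(A-\alpha+Bbd)-Bbd\,\alpha^{2p+1}}{A-\alpha}=\frac{P_{2p+1,A,\alpha}^{B,b,d}}{A-\alpha},$$
and in the same way
$$A^{2p+2}+Bbd\sum_{i=0}^{2p+1}A^i\alpha^{2p+1-i}=\frac{P_{2p+2,A,\alpha}^{B,b,d}}{A-\alpha}.$$
The analogous identities with the pair $(a,c)$ in place of $(b,d)$ hold verbatim, since the left-hand sides differ only in the coefficient $Bac$ versus $Bbd$.

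Then I would substitute these into the four formulas of Theorem \ref{Th_1} and keep track of the surviving powers of $A-\alpha$. For $x_{4n-3}$ the numerator product runs over $p=0,\dots,n-2$ (that is $n-1$ factors, each contributing a denominator $A-\alpha$) while the denominator product runs over $p=0,\dots,n-1$ (that is $n$ such factors), so the net effect is a single factor $(A-\alpha)^{n}/(A-\alpha)^{n-1}=A-\alpha$ left in the numerator — exactly the extra $(A-\alpha)$ displayed in the statement; the same count handles $x_{4n-2}$. For $x_{4n-1}$ and $x_{4n}$ both products have $n$ factors, so the powers of $A-\alpha$ cancel completely and none remains. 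Collecting the remaining $\alpha^n$, the initial value, and the $P$-products reproduces precisely the four expressions in the corollary.

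There is essentially no hard step here: the only point requiring care is the index bookkeeping for the powers of $A-\alpha$ (the off-by-one between the numerator and denominator product ranges), together with noting that the standing hypotheses — $A\neq\alpha$, which makes the geometric-sum formula legitimate, and the nonvanishing of all denominators, which makes Theorem \ref{Th_1} applicable in the first place — are in force. Since Theorem \ref{Th_1} is stated for $n\geq 2$, the empty-product conventions in the boundary case $n=2$ cause no difficulty.
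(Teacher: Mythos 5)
Your proposal is correct and takes essentially the same route as the paper: the paper's one-line proof invokes the identity $x^p-y^p=(x-y)\sum_{i=0}^{p-1}x^iy^{p-1-i}$, which is exactly the geometric-sum formula you use, and your substitution into \eqref{e_2}--\eqref{e5} together with the count of surviving $(A-\alpha)$ factors is just the fully written-out version of that argument.
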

\begin{proof} It suffices to use the binomial identity $x^p-y^p=(x-y)\sum_{i=0}^{p-1}x^iy^{p-1-i}$
 in the analytical expression of the subsequences \eqref{e_2}, $\cdots$, \eqref{e5}.
\end{proof}
If $A=\alpha$, then every solution of $\eqref{e_1}$ can be
expressed by using the function Gamma.
\begin{cor}\label{c_2}
If $A=\alpha$, then
\begin{align*}
x_{4n-3}&=\frac{A 2^{2n-2}\Gamma^2(\frac{A}{2B
bd}+n)\Gamma(\frac{A}{B bd}+1)}{B b\Gamma^2(\frac{A}{2B
bd}+1)\Gamma(\frac{A}{B bd}+2n)},\\
 x_{4n-2}&=\frac{A 2^{2n-2}\Gamma^2(\frac{A}{2B
ac}+n)\Gamma(\frac{A}{B ac})}{B a\Gamma^2(\frac{A}{2B
ac}+1)\Gamma(\frac{A}{B ac}+2n)},\\
x_{4n-1}&=\frac{b\Gamma(\frac{A}{B bd}+2n+1)\Gamma^2(\frac{A}{2B
bd}+1)}{2^{2n}\Gamma(\frac{A}{B bd}+1)\Gamma^2(\frac{A}{2B
bd}+n+1)},\\
 x_{4n}&=\frac{a\Gamma(\frac{A}{B ac}+2n+1)\Gamma^2(\frac{A}{2B
ac}+1)}{2^{2n}\Gamma(\frac{A}{B ac}+1)\Gamma^2(\frac{A}{2B
ac}+n+1)}.\end{align*}
\end{cor}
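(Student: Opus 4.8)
The plan is to return to the general formulas \eqref{e_2}--\eqref{e5} of Theorem~\ref{Th_1} and specialize them to $\alpha=A$ directly; Corollary~\ref{c_1} is of no help here, since its derivation divides by $A-\alpha$. The key simplification is that when $\alpha=A$ every geometric-type sum collapses, $\sum_{i=0}^{m}A^{i}\alpha^{m-i}=(m+1)A^{m}$, so in \eqref{e_2} each numerator factor becomes $A^{2p+2}+Bbd\,(2p+2)A^{2p+1}=A^{2p+1}\bigl(A+2(p+1)Bbd\bigr)$ and each denominator factor becomes $A^{2p+1}+Bbd\,(2p+1)A^{2p}=A^{2p}\bigl(A+(2p+1)Bbd\bigr)$, and similarly for the other three subsequences (with $bd$ replaced by $ac$ where appropriate).

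Next I would collect the pure powers of $A$. Using elementary sums such as $\sum_{p=0}^{n-2}(2p+1)=(n-1)^{2}$, $\sum_{p=0}^{n-1}(2p+1)=n^{2}$ and $\sum_{p=0}^{n-1}2p=n(n-1)$, these powers telescope, leaving $A^{1}$ in $x_{4n-3}$ and $x_{4n-2}$ and $A^{0}$ in $x_{4n-1}$ and $x_{4n}$. What remains is a quotient of two rising-factorial products; putting $u=\tfrac{A}{2Bbd}$ one finds, for example,
\[
x_{4n-3}=\frac{dA\,(2Bbd)^{\,n-1}\prod_{p=0}^{n-2}\bigl(u+p+1\bigr)}{(2Bbd)^{\,n}\prod_{p=0}^{n-1}\bigl(u+p+\tfrac12\bigr)}=\frac{A}{2Bb}\cdot\frac{\Gamma(u+n)\,\Gamma(u+\tfrac12)}{\Gamma(u+1)\,\Gamma(u+n+\tfrac12)},
\]
where the two products have been written as ratios of Gamma values via $\Gamma(z+k)=z(z+1)\cdots(z+k-1)\Gamma(z)$; the computations for $x_{4n-1}$, $x_{4n-2}$ and $x_{4n}$ are identical up to index shifts and the substitution $(b,d)\leftrightarrow(a,c)$.

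The last step is to eliminate the half-integer shifts. Applying the Legendre duplication formula $\Gamma(z)\Gamma(z+\tfrac12)=2^{1-2z}\sqrt{\pi}\,\Gamma(2z)$ at $z=u$ and at $z=u+n$ (using $\Gamma(u+1)=u\Gamma(u)$ and $\Gamma(u+n+1)=(u+n)\Gamma(u+n)$ to match the Gammas that actually occur) turns every argument into $u=\tfrac{A}{2Bbd}$ or $2u=\tfrac{A}{Bbd}$, the $\sqrt{\pi}$ factors cancel, and after gathering the leftover powers of $2$ one obtains exactly the stated expression for $x_{4n-3}$; the remaining three identities follow in the same way. I expect the only delicate points to be pinning down the exponent of $2$ after the two uses of the duplication formula and checking the empty-product conventions for the smallest values of $n$, both of which are mechanical once the reduction above is in place.
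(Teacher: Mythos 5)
Your derivation is correct, but the final step takes a genuinely different route from the paper's. Both arguments start the same way: set $\alpha=A$ in Theorem \ref{Th_1}, collapse $\sum_{i=0}^{m}A^{i}\alpha^{m-i}$ to $(m+1)A^{m}$, and cancel the pure powers of $A$, arriving at $x_{4n-3}=dA\,\prod_{p=0}^{n-2}\bigl(A+(2p+2)Bbd\bigr)\big/\prod_{p=0}^{n-1}\bigl(A+(2p+1)Bbd\bigr)$. From there the paper stays elementary: it factors out $Bbd$ (so the terms read $\frac{A}{Bbd}+2p$ and $\frac{A}{Bbd}+2p+1$), then multiplies numerator and denominator by $\prod_{p=1}^{n-1}\bigl(\frac{A}{Bbd}+2p\bigr)$, so the denominator interleaves into the full consecutive product $\prod_{p=1}^{2n-1}\bigl(\frac{A}{Bbd}+p\bigr)=\Gamma\bigl(\frac{A}{Bbd}+2n\bigr)/\Gamma\bigl(\frac{A}{Bbd}+1\bigr)$ while the squared numerator gives $2^{2n-2}\,\Gamma^{2}\bigl(\frac{A}{2Bbd}+n\bigr)/\Gamma^{2}\bigl(\frac{A}{2Bbd}+1\bigr)$ directly, using nothing beyond $\Gamma(z+1)=z\Gamma(z)$. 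You instead factor out $2Bbd$, obtain Gamma ratios with the half-integer shifts $u+\tfrac12$, and invoke Legendre duplication at $z=u$ and $z=u+n$; the bookkeeping checks out (the two duplications contribute $2^{2n}$, the shift $\Gamma(2u)/\Gamma(u)=\Gamma(2u+1)/(2\Gamma(u+1))$ contributes $\tfrac12$, and the prefactor $\frac{A}{2Bb}$ then yields exactly $2^{2n-2}$), so your proof is sound. The trade-off is that your version is the standard mechanical treatment and makes the simultaneous appearance of $\frac{A}{2Bbd}$ and $\frac{A}{Bbd}$ transparent, whereas the paper's interleaving trick is in effect an in-place elementary proof of precisely the instance of the duplication identity you import. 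One side remark: your substitution $(b,d)\to(a,c)$ produces $\Gamma\bigl(\frac{A}{Bac}+1\bigr)$ in the numerator of $x_{4n-2}$, which indicates that the $\Gamma\bigl(\frac{A}{Bac}\bigr)$ printed in the statement is a typo rather than a defect of your argument.
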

\begin{proof}
By \eqref{e_2} we have:

$x_{4n-3}=\frac{ dA^n\prod_{p=0}^{n-2}\Big(A^{2p+2}+B
bd\sum_{i=0}^{2p+1}A^{2p+1}\Big)}{\prod_{p=0}^{n-1}\Big(A^{2p+1}+B
bd\sum_{i=0}^{2p}A^{2p}\Big)}=\frac{
dA^n\prod_{p=0}^{n-2}A^{2p+1}\Big(A+(2p+2)B
bd\Big)}{\prod_{p=0}^{n-1}A^{2p}\Big(A+(2p+1)B bd\Big)}$

$=\frac{
dA^n\prod_{p=0}^{n-2}A^{2p}\prod_{p=0}^{n-2}A\prod_{p=0}^{n-2}\Big(A+(2p+2)B
bd\Big)}{\prod_{p=0}^{n-1}A^{2p}\prod_{p=0}^{n-1}\Big(A+(2p+1)B
bd\Big)}=\frac{ dA\prod_{p=0}^{n-2}\Big(A+(2p+2)B
bd\Big)}{\prod_{p=0}^{n-1}\Big(A+(2p+1)B bd\Big)}$

$=\frac{
dA\prod_{p=0}^{n-2}Bbd\Big(\frac{A}{Bbd}+2p+2\Big)}{\prod_{p=0}^{n-1}Bbd\Big(\frac{A}{Bbd}+2p+1\Big)}=\frac{
dA\prod_{p=1}^{n-1}\Big(\frac{A}{Bbd}+2p\Big)}{
Bbd\prod_{p=0}^{n-1}\Big(\frac{A}{Bbd}+2p+1\Big)}$

$=\frac{
A\Big[\prod_{p=1}^{n-1}\Big(\frac{A}{Bbd}+2p\Big)\Big]^2}{
Bb\prod_{p=0}^{n-1}\Big(\frac{A}{Bbd}+2p+1\Big)\Big(\prod_{p=1}^{n-1}\Big(\frac{A}{Bbd}+2p\Big)}=\frac{
A\Big[\prod_{p=1}^{n-1}2\Big(\frac{A}{2Bbd}+p\Big)\Big]^2}{
Bb\prod_{p=1}^{2n-1}\Big(\frac{A}{Bbd}+p\Big)}$

$=\frac{
A2^{2n-2}\Big[\prod_{p=1}^{n-1}\Big(\frac{A}{2Bbd}+p\Big)\Big]^2\Gamma(\frac{A}{Bbd}+1)}{
Bb\,\Gamma\Big(\frac{A}{Bbd}+2n\Big)}=\frac{
A2^{2n-2}\Gamma^2\Big(\frac{A}{2Bbd}+n\Big)\Gamma(\frac{A}{Bbd}+1)}{
Bb\,\Gamma\Big(\frac{A}{Bbd}+2n\Big)\Gamma^2\Big(\frac{A}{2Bbd}+1\Big)}.$

Similarly, one can prove the other relations. This ended the
proof.
\end{proof}
\begin{rem}\label{Rem_1}
\begin{enumerate}\noindent
\item The cases $\alpha =0$ or $B=0$ are trivial. Then, $\alpha$
and $\beta$ are assumed to be nonzero real numbers.
 \item A common hypothesis in the study of rational
difference equations is the choice of positive coefficients and
initial conditions. Therefore, all the solutions will be
automatically well defined. This is the framework, for example, in
\cite{cala}. It is a problem of great difficulty to determine the
good set of initial conditions for which a solution of a rational
difference equation is well defined for all $n\geq 0$. The
obtention of the explicit solution helps to extend the choice of
coefficients and initial conditions. By corollary, \eqref{c_2}, If
$abcd\neq 0$,$\frac{A}{Bbd},\: \frac{A}{Bac}\notin
\{1\}\cup\{2n,n\in\mathbb{Z}\}$, then all the solutions of
\eqref{e_1} are well defined.
 \item Our results cover the four equations considered in
Elsayed \cite{EM EL 12}. For example, the equation
$$x_{n+1}=\frac{x_{n-3}}{1+x_{n-1}x_{n-3}}$$ corresponds to the
case $\alpha= A=B=1$. The obtained expressions, in this case, are:
$$x_{4n-3}=\frac{ d\prod_{i=0}^{n-1}\Big(1+2ibd\Big)}{ \prod_{i=0}^{n-1}\Big(1+(2i+1)bd\Big)},\qquad x_{4n-1}=\frac{ b\prod_{i=0}^{n-1}\Big(1+(2i+1)bd\Big)}{ \prod_{i=0}^{n-1}\Big(1+(2i+2)bd\Big)},$$
 $$x_{4n-2}=\frac{ c\prod_{i=0}^{n-1}\Big(1+2iac\Big)}{ \prod_{i=0}^{n-1}\Big(1+(2i+1)ac\Big)},\qquad x_{4n}=\frac{ a\prod_{i=0}^{n-1}\Big(1+(2i+1)ac\Big)}{ \prod_{i=0}^{n-1}\Big(1+(2i+2)ac\Big)},$$
where $\prod_{i=0}^{-1}A_i=1$.

According to our results, these expressions can be rewritten using
the Gamma function as:
$$x_{4n-3}=\frac{2^{2n-2}\Gamma^2(\frac{1}{2 bd}+n)\Gamma(\frac{1}{bd})}{b\Gamma^2(\frac{1}{2bd}+1)\Gamma(\frac{1}{bd}+2n)},\qquad x_{4n-2}=\frac{2^{2n-2}\Gamma^2(\frac{1}{2ac}+n)\Gamma(\frac{1}{ac})}{a\Gamma^2(\frac{1}{2ac}+1)\Gamma(\frac{1}{ac}+2n)},$$
$$x_{4n-1}=\frac{b\Gamma(\frac{1}{bd}+2n+1)\Gamma^2(\frac{1}{2bd}+1)}{2^{2n}\Gamma(\frac{1}{bd}+1)\Gamma^2(\frac{1}{2bd}+n+1)},\qquad
x_{4n}=\frac{a\Gamma(\frac{1}{ac}+2n+1)\Gamma^2(\frac{1}{2ac}+1)}{2^{2n}\Gamma(\frac{1}{ac}+1)\Gamma^2(\frac{1}{2ac}+n+1)}.$$
\end{enumerate}
\end{rem}
\section{Convergence and existence of unbounded solutions}
\subsection{The case $|\frac{A}{\alpha}|>1$}

\begin{thm}\label{Th_3}
Assume that $|\frac{A}{\alpha}|>1$, then:
\begin{enumerate}
\item If $A-\alpha+B bd\neq 0$ and $A-\alpha+B ac\neq 0$, then
every solution of the equation \eqref{e_1} converges toward zero.

\item If  $A-\alpha+B bd= 0$ and $A-\alpha+B ac= 0$, then every
solution of the equation \eqref{e_1} converges to a number $l$ if
and only if $a=b=c=d=l$
\end{enumerate}
\end{thm}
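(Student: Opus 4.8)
The plan is to read everything off the closed forms in Corollary \ref{c_1}, which applies because $|A/\alpha|>1$ forces $A\neq\alpha$ (and $\alpha\neq 0$ throughout). The key observation is that each factor there can be written as
\[
P_{p,A,\alpha}^{B,b,d}=A^{p}\big((A-\alpha+Bbd)-Bbd\,(\alpha/A)^{p}\big),
\]
and similarly with $(a,c)$ in place of $(b,d)$. Since $|\alpha/A|<1$, the perturbations $(\alpha/A)^{p}$ are summable, so each of the infinite products $\prod_{p\ge 0}\big(1-\frac{Bbd}{A-\alpha+Bbd}(\alpha/A)^{p}\big)$ and $\prod_{p\ge 0}\big(1-\frac{Bac}{A-\alpha+Bac}(\alpha/A)^{p}\big)$ — which are meaningful precisely because $A-\alpha+Bbd\neq 0$ and $A-\alpha+Bac\neq 0$ — converges to a finite limit, and, since only finitely many of their factors can vanish, for a solution defined for all $n$ these limits are nonzero.

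For part (1), insert this factorization into, say, the formula for $x_{4n-3}$ in Corollary \ref{c_1}. The powers of $A$ collect as $A^{\sum_{p=0}^{n-2}(2p+2)}/A^{\sum_{p=0}^{n-1}(2p+1)}=A^{(n^{2}-n)-n^{2}}=A^{-n}$, which, combined with the explicit $\alpha^{n}$, yields the factor $(\alpha/A)^{n}\to 0$. Pulling $(A-\alpha+Bbd)$ out of each bracket contributes the fixed constant $(A-\alpha+Bbd)^{(n-1)-n}=(A-\alpha+Bbd)^{-1}$, while the remaining brackets converge to the ratio of the two infinite products above. Hence $x_{4n-3}\to 0$. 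The identical count — using $\sum_{p=0}^{n-1}(2p+1)=n^{2}$, $\sum_{p=0}^{n-1}(2p+2)=n^{2}+n$, $\sum_{p=0}^{n-2}(2p+2)=n^{2}-n$ — produces the decisive factor $(\alpha/A)^{n}$ in $x_{4n-2}$, $x_{4n-1}$ and $x_{4n}$ as well, so all four subsequences tend to $0$ and therefore $x_{n}\to 0$.

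For part (2), the hypotheses $A-\alpha+Bbd=0$ and $A-\alpha+Bac=0$ say $Bbd=Bac=\alpha-A$, hence $P_{p,A,\alpha}^{B,b,d}=-Bbd\,\alpha^{p}=(A-\alpha)\alpha^{p}$ and likewise $P_{p,A,\alpha}^{B,a,c}=(A-\alpha)\alpha^{p}$. Substituting into Corollary \ref{c_1}, every factor $(A-\alpha)$ and every power of $\alpha$ cancels between numerator and denominator, leaving $x_{4n-3}=d$, $x_{4n-2}=c$, $x_{4n-1}=b$, $x_{4n}=a$; one checks directly from \eqref{e_1}, using $A+Bbd=A+Bac=\alpha$, that this already holds from the first indices on. Thus every solution is periodic with period dividing $4$, and such a sequence converges if and only if its four values coincide, i.e.\ iff $a=b=c=d$; writing $l$ for that common value, the solution is the constant sequence equal to $l$, which settles the equivalence.

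The one genuinely delicate point is the claim of the first paragraph: that the residual products $\prod_{p}\big(1-\frac{Bbd}{A-\alpha+Bbd}(\alpha/A)^{p}\big)$ (and its $(a,c)$ analogue) neither diverge nor collapse to $0$. This is exactly where $|\alpha/A|<1$ (summability of the perturbations) and the standing assumption that the solution is well defined (no bracket equal to zero) are both used. The rest is the routine evaluation of the arithmetic-progression exponent sums $\sum_{p=0}^{m}(2p+c)$ together with $(\alpha/A)^{n}\to 0$.
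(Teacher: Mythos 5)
Your proposal is correct and follows essentially the same route as the paper: both invoke Corollary \ref{c_1}, factor $A^{p}(A-\alpha+Bbd)$ out of each $P$-term so that the exponent count produces the decisive factor $(\alpha/A)^{n}\to 0$ alongside a convergent residual infinite product for part (1), and both observe that the hypotheses of part (2) force all four subsequences to be constant. The only cosmetic difference is that you argue each of the two infinite products converges separately to a nonzero limit, whereas the paper Taylor-expands the ratio $U_p$ of consecutive factors.
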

\begin{proof}
\begin{enumerate}
\item By corollary \eqref{c_1}
\begin{align*}
x_{4n-3}&=\frac{d\alpha^n(A-\alpha)\prod_{p=0}^{n-2}\Big(A^{2p+2}(A-\alpha+B
bd)-B
bd\alpha^{2p+2}\Big)}{\prod_{p=0}^{n-1}\Big(A^{2p+1}(A-\alpha+B
bd)-B bd\alpha^{2p+1}\Big)}\\
&=\frac{d\alpha^n(A-\alpha)\prod_{p=0}^{n-2}A^{2p+2}(A-\alpha+B
bd)\prod_{p=0}^{n-2}\Big(1-\frac{B bd}{A-\alpha+B
bd}(\frac{\alpha}{A})^{2p+2}\Big)}{\prod_{p=0}^{n-1}A^{2p+1}(A-\alpha+B
bd)\prod_{p=0}^{n-1}\Big(1-\frac{B bd}{A-\alpha+B
bd}(\frac{\alpha}{A})^{2p+1}\Big)}\\
&=\frac{d\alpha^n(A-\alpha)A^{n-1}\prod_{p=0}^{n-2}\Big(1-\frac{B
bd}{A-\alpha+B bd}(\frac{\alpha}{A})^{2p+2}\Big)}{(A-\alpha+B
bd)A^{2n-1}\prod_{p=0}^{n-2}\Big(1-\frac{B bd}{A-\alpha+B
bd}(\frac{\alpha}{A})^{2p+1}\Big)}
\end{align*}
Denote by $\beta=\frac{B bd}{A-\alpha+B bd}$ and let $(U_p)_p$ be
the sequence defined by $U_p=\frac{1-\beta
(\frac{\alpha}{A})^{2p+2}}{1-\beta (\frac{\alpha}{A})^{2p+1}}$.
then we can write
$$x_{4n-3}=\frac{d(\frac{\alpha}{A})^n(A-\alpha)}{(A-\alpha+B bd)\Big(1-\beta(\frac{\alpha}{A})^{2n-1}\Big)}\prod_{p=0}^{n-2}U_p.$$
 For $p\in\mathbb{N}$ big enough, We have either:  $U_p>1$ or
 $0<U_p<1$.
By Taylor expansion, we obtain
\begin{align*}
U_p&=(1-\beta (\frac{\alpha}{A})^{2p+2})(1+\beta
(\frac{\alpha}{A})^{2p+1}+o(\frac{\alpha}{A})^{2p+1})\\
&=1+\beta
(1-\frac{\alpha}{A})(\frac{\alpha}{A})^{2p+1}+o(\frac{\alpha}{A})^{2p+1}
\end{align*}
then $U_p\sim 1+\beta(1-\frac{\alpha}{A})
(\frac{\alpha}{A})^{2p+1}$ which is the general term of a
convergent infinite product. We deduce that  $(x_{4n-3})_n$
converges toward zero. Similarly we do for the other subsequences.
\item If  $A-\alpha+B bd= 0$ and $A-\alpha+B ac= 0$, then the
subsequences $(x_{4n-3})_n$, $(x_{4n-1})_n$ are constant:
$x_{4n-3}\equiv d$ and $x_{4n-1}\equiv b$. Similarly, the
subsequences $(x_{4n-2})_n$, $(x_{4n})_n$ are constant:
$x_{4n-2}\equiv c$ and  $x_{4n}\equiv a$. Then every solution of
the equation \eqref{e_1} converges to a number $l$ if and only if
$a=b=c=d=l$.
\end{enumerate}
\end{proof}
\subsection{The case $|\frac{A}{\alpha}|=1$}
We distinguish two cases: $A=\alpha$ and $A=-\alpha$.
\begin{thm}\label{Th_4}
If $A=\alpha$, then every solution of equation \eqref{e_1}
converges toward zero.
\end{thm}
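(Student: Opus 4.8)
The plan is to read the limit directly off the closed form of the solution. Since $A=\alpha\neq 0$ and $B,b,c,d\neq 0$ (Remark~\ref{Rem_1}), the quantities $\gamma:=Bbd$ and $\delta:=Bac$ are nonzero; put $r:=A/\gamma$ and $s:=A/\delta$. Simplifying the products of Theorem~\ref{Th_1} exactly as in the proof of Corollary~\ref{c_2} (using $\sum_{i=0}^{m}A^{i}\alpha^{m-i}=(m+1)A^{m}$ when $A=\alpha$) gives, for $n\ge 2$,
\[
x_{4n-3}=\frac{dA}{\gamma}\,\frac{1}{r+2n-1}\prod_{p=0}^{n-2}\frac{r+2p+2}{r+2p+1},\qquad x_{4n-1}=b\prod_{p=1}^{n}\frac{r+2p-1}{r+2p},
\]
together with the analogous identities for $x_{4n-2}$ and $x_{4n}$ obtained by replacing $(\gamma,r,d,b)$ by $(\delta,s,c,a)$. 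Equivalently, one checks the telescoping ratios
\[
\frac{x_{4(n+1)-3}}{x_{4n-3}}=\frac{A+2n\gamma}{A+(2n+1)\gamma},\qquad \frac{x_{4(n+1)-1}}{x_{4n-1}}=\frac{A+(2n+1)\gamma}{A+(2n+2)\gamma},
\]
and symmetrically with $\gamma$ replaced by $\delta$ for the two remaining subsequences.

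The heart of the argument is a convergence-of-infinite-products estimate. Fix one subsequence, say $(x_{4n-3})_n$, and assume the solution is well defined, so none of the factors above vanishes. Choose $N$ so large that $A+2n\gamma$ and $A+(2n+1)\gamma$ have the same (constant) sign for all $n\ge N$; for such $n$ the ratio $\rho_n:=\dfrac{A+2n\gamma}{A+(2n+1)\gamma}$ satisfies $0<\rho_n<1$ and
\[
1-\rho_n=\frac{\gamma}{A+(2n+1)\gamma}\sim\frac{1}{2n}\qquad(n\to\infty).
\]
Since $\sum_{n\ge N}(1-\rho_n)=+\infty$, the classical criterion for infinite products yields $\prod_{n\ge N}\rho_n=0$, hence $x_{4n-3}=x_{4N-3}\prod_{k=N}^{n-1}\rho_k\to 0$. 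The same reasoning, now with $1-\dfrac{A+(2n+1)\gamma}{A+(2n+2)\gamma}=\dfrac{\gamma}{A+(2n+2)\gamma}\sim\dfrac{1}{2n}$, gives $x_{4n-1}\to 0$, and the $(\delta,s)$-versions give $x_{4n-2}\to 0$ and $x_{4n}\to 0$. Therefore every solution of \eqref{e_1} tends to $0$.

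The main obstacle is that the ratio of consecutive terms tends to $1$, so the ratio and root tests are silent; the decisive input is the finer fact that $\prod(1-a_n)=0$ whenever $a_n\ge 0$ and $\sum a_n=\infty$, applied after isolating the $\sim 1/(2n)$ asymptotics. A minor but necessary bookkeeping point is the sign of $\gamma=Bbd$ (resp. $Bac$): since convergence concerns only the tail and, for a well-defined solution, no factor is zero, one may first pass to indices $n\ge N$ for which the relevant factors are positive, after which the product estimate applies verbatim. An equivalent alternative is to start from the Gamma-function form in Corollary~\ref{c_2}: applying the Legendre duplication formula to $\Gamma\!\big(\tfrac{A}{\gamma}+2n\big)$ together with $\Gamma(x+n)/\Gamma\!\big(x+n+\tfrac12\big)\sim n^{-1/2}$ exhibits $x_{4n-3}$ as a constant times $n^{-1/2}(1+o(1))$, which again converges to $0$.
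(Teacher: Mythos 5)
Your proof is correct and follows essentially the same route as the paper: both reduce, when $A=\alpha$, to the telescoping product with factors $\dfrac{A+2p\gamma}{A+(2p+1)\gamma}=1-\dfrac{1}{2p}+o(1/p)$ and invoke the criterion that $\prod(1-a_p)=0$ when $0\le a_p<1$ and $\sum a_p=\infty$. Your explicit handling of the sign of $\gamma=Bbd$ by passing to a tail, and the alternative $n^{-1/2}$ asymptotic via the Gamma form, are just slightly more careful renderings of the same argument.
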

\begin{proof}
 Denote $e=\frac{A}{Bbd}$. From the proof of corollary \eqref{c_2}, we
 deduce:
 \begin{align*}
 x_{4n-3}&=\frac{ A\prod_{p=1}^{n-1}\Big(\frac{A}{Bbd}+2p\Big)}{
 Bb\prod_{p=0}^{n-1}\Big(\frac{A}{Bbd}+2p+1\Big)}\\
 &=\frac{ A}{Bb(e+1)}
 \prod_{p=1}^{n-1}\Big(\frac{ e+2p}{
 e+2p+1}\Big)\\
&=\frac{ A}{Bb(e+1)}\prod_{p=1}^{n-1} \Big(\frac{ \frac{e}{2p}+1}{
\frac{e+1}{2p}+1}\Big)
\end{align*}
Let $(W_p)_p$ be the sequence defined by
$W_p=\frac{\frac{e}{2p}+1}{\frac{e+1}{2p}+1}$, it is clear that:
(i) $\lim_{p\rightarrow\infty}W_p=1$,\\
(ii) For $p$ big enough, $0<W_p<1$.\\
\noindent By Taylor expansion,
$W_p=(1+\frac{e}{2p})(1-\frac{e+1}{2p}+o(\frac{1}{p}))=1-\frac{1}{2p}+o(\frac{1}{p})$,
which is a general term of a divergent infinite product, since for
$p$ big enough, $0<W_p<1$, then $\lim_{n\rightarrow
\infty}\Pi_{p=1}^{n-1}W_{p}=0$, therefore $\lim_{n\rightarrow
\infty}x_{4n-3}=0$. Similarly, one can  prove that the limits of
the other subsequences is zero. Hence $(x_n)_{n=-3}^\infty$
converges to zero.\end{proof}
\begin{thm}\label{Th_5}
If $A=-\alpha$, then every solution of equation \eqref{e_1} is
unbounded.
\end{thm}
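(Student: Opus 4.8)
The plan is to read the result straight off the explicit solution. Since $\alpha$ is assumed nonzero, the hypothesis $A=-\alpha$ already gives $A\neq\alpha$, so Corollary \ref{c_1} applies, and it suffices to specialize its four closed forms to $\alpha=-A$. I expect that under this specialization each of the four subsequences degenerates into an honest geometric sequence, which makes the unboundedness transparent.

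Concretely, the first step is to substitute $\alpha=-A$ into $P_{p,A,\alpha}^{B,b,d}=A^{p}(A-\alpha+Bbd)-Bbd\,\alpha^{p}$. Using $\alpha^{p}=(-1)^{p}A^{p}$ one gets, for even indices, $P_{2p+2,A,-A}^{B,b,d}=2A^{2p+3}$, and for odd indices, $P_{2p+1,A,-A}^{B,b,d}=2A^{2p+1}(A+Bbd)$ (and the analogues with $(a,c)$ replacing $(b,d)$). Plugging these into the products of Corollary \ref{c_1}, the constants $2$ and all but a bounded power of $A$ telescope, and after collecting exponents one is left with
$$x_{4n-3}=d\Big(\frac{\alpha}{A+Bbd}\Big)^{n},\qquad x_{4n-1}=b\Big(\frac{A+Bbd}{\alpha}\Big)^{n},\qquad x_{4n-2}=c\Big(\frac{\alpha}{A+Bac}\Big)^{n},\qquad x_{4n}=a\Big(\frac{A+Bac}{\alpha}\Big)^{n}.$$
As a cross-check, these can be obtained directly from \eqref{e_1}: when $A=-\alpha$ one verifies that the products $x_{4n-1}x_{4n-3}$ and $x_{4n}x_{4n-2}$ are constant (equal to $bd$ and $ac$ respectively), after which the recurrence collapses to the geometric relations above.

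Next, set $q_{1}=\frac{A+Bbd}{\alpha}$ and $q_{2}=\frac{A+Bac}{\alpha}$; for any well-defined solution these are nonzero reals, since $A+Bbd=0$ (resp. $A+Bac=0$) would put the initial data in the forbidden set. Within each pair the ratios $q_{1}$ and $q_{1}^{-1}$ (resp. $q_{2}$ and $q_{2}^{-1}$) are reciprocal, so as soon as $|q_{1}|\neq1$ one of the subsequences $(x_{4n-3})_{n}$, $(x_{4n-1})_{n}$ diverges to infinity in absolute value, and similarly for $q_{2}$. Since $q_{1}=-1$ would force $Bbd=0$ (excluded), $|q_{1}|=1$ is equivalent to $q_{1}=1$, i.e. to $A-\alpha+Bbd=0$; likewise $|q_{2}|=1$ is equivalent to $A-\alpha+Bac=0$. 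Hence, provided at least one of $A-\alpha+Bbd$, $A-\alpha+Bac$ is nonzero, some subsequence is unbounded and therefore so is $(x_n)$.

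The one point that genuinely needs attention is the fully degenerate case $A-\alpha+Bbd=A-\alpha+Bac=0$: then all four ratios equal $1$, every subsequence is constant, and the solution is $4$-periodic, hence bounded. This is the exact counterpart of part~(2) of Theorem~\ref{Th_3} and is the case that must be set aside in the statement. Apart from isolating this exceptional case, the argument is routine; the only real labor is the exponent bookkeeping in the telescoping step.
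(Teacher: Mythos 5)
Your computation is correct and follows essentially the same route as the paper: substitute $\alpha=-A$ into the explicit solution, observe that each subsequence becomes geometric with ratio $q_1=-(A+Bbd)/A$ or its reciprocal (resp.\ $q_2=-(A+Bac)/A$), and conclude that one member of each reciprocal pair blows up in absolute value. The substantive difference is that you have caught a genuine gap in the paper's own argument: the paper's case split treats only $|1+e^{-1}|>1$ and $|1+e^{-1}|<1$ and silently omits $|1+e^{-1}|=1$. As you observe, since $Bbd\neq0$ this borderline case forces $q_1=1$, i.e.\ $2A+Bbd=0$, which is exactly $A-\alpha+Bbd=0$; and when both $A-\alpha+Bbd=0$ and $A-\alpha+Bac=0$ hold, all four subsequences are constant (indeed $x_1=\frac{-Ad}{A+Bbd}=d$ directly from \eqref{e_1}), so the solution is the bounded period-$4$ sequence $d,c,b,a,\dots$ and Theorem \ref{Th_5} is false as stated. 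The correct statement must exclude this degenerate configuration, in parallel with the hypothesis distinguishing the two parts of Theorem \ref{Th_3}; your proof establishes unboundedness precisely when at least one of $A-\alpha+Bbd$, $A-\alpha+Bac$ is nonzero, and your telescoping of $P_{2p+1,A,-A}^{B,b,d}=2A^{2p+1}(A+Bbd)$ and $P_{2p+2,A,-A}^{B,b,d}=2A^{2p+3}$ checks out.
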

\begin{proof}
If we replace $\alpha$ by $-A$ in the first term of equation
\eqref{e_2}, we obtain
\begin{align*}
x_{4n-3}&=\frac{d(-A)^n\prod_{p=0}^{n-2}\Big(A^{2p+2}+B
bd\sum_{i=0}^{2p+1}A^i(-A)^{2p+1-i}\Big)}{\prod_{p=0}^{n-1}\Big(A^{2p+1}+B
bd\sum_{i=0}^{2p}A^i(-A)^{2p-i}\Big)}\\
&=\frac{d(-A)^n\prod_{p=0}^{n-2}A^{2p+1}\Big(A+B
bd\sum_{i=0}^{2p+1}(-1)^{2p+1-i}\Big)}{\prod_{p=0}^{n-1}A^{2p}\Big(A+B
bd\sum_{i=0}^{2p}(-1)^{2p-i}\Big)}\\
&=\frac{d(-A)^n\prod_{p=0}^{n-2}A^{2p+1}\Big(A+B
bd\sum_{k=0}^{2p+1}(-1)^{k}\Big)}{\prod_{p=0}^{n-1}A^{2p}\Big(A+B
bd\sum_{k=0}^{2p}(-1)^{k}\Big)}\\
&=
\frac{d(-A)^n\prod_{p=0}^{n-2}A^{2p+2}}{\prod_{p=0}^{n-1}A^{2p}\Big(A+B
bd\Big)}\\
&=\frac{d(-A)^n A^{2n-2}\prod_{p=0}^{n-2}A^{2p}}{
A^{2n-2}\prod_{p=0}^{n-2}A^{2p}\Big(A+B bd\Big)^{n}}\\
&=\frac{ d(-A)^n}{\Big(A+B
bd\Big)^{n}}\\
&=\frac{ d}{\Big(-1-\frac{B
bd}{A}\Big)^{n}}\\
&=\frac{ d}{(-1-e^{-1})^{n}}
\end{align*}
 Hence:

(i) If $|1+e^{-1}|>1$, then $(x_{4n-3})_n$ converges to zero.

(ii) If $|1+e^{-1}|<1$, then $(x_{4n-3})_n$ is not bounded and
$\lim_{n\rightarrow\infty}|x_{4n-3}|=+\infty$.

In other hand, If we replace $\alpha$ by $-A$ in the first term of
equation \eqref{e_3}, we obtain
\begin{align*}
x_{4n-1}&=\frac{b(-A)^n\prod_{p=0}^{n-1}\Big(A^{2p+1}+B
bd\sum_{i=0}^{2p}A^i(-A)^{2p-i}\Big)}{\prod_{p=0}^{n-1}\Big(A^{2p+2}+B
bd\sum_{i=0}^{2p+1}A^i(-A)^{2p+1-i}\Big)}\\
&= \frac{b(-A)^n\prod_{p=0}^{n-1}A^{2p}\Big(A+B
bd\sum_{i=0}^{2p}(-1)^{2p-i}\Big)}{\prod_{p=0}^{n-1}A^{2p+1}\Big(A+B
bd\sum_{i=0}^{2p+1}(-1)^{2p+1-i}\Big)}\\
&= b(-A)^n\prod_{p=0}^{n-1}\Big(\frac{A^{2p}(A+B
bd)}{A^{2p+2}}\Big)\\
&= b(-A)^n\prod_{p=0}^{n-1}\Big(\frac{A^{2p}(A+B
bd)}{A^{2p+2}}\Big)\\
 &=b(-1)^n\Big(1+\frac{B
bd}{A}\Big)^n\\
 &= b(-1-e^{-1})^n
\end{align*}
 Hence:

(iii) If $|1+e^{-1}|>1$, then $(x_{4n-1})_n$ is not bounded and
$\lim_{n\rightarrow\infty}|x_{4n-3}|=+\infty$.

(iv) If $|1+e^{-1}|<1$, then $(x_{4n-1})_n$ converges to zero.

Combining propositions (i), (ii), (iii) and (iv), the proof is
ended.
\end{proof}
\begin{rem}\label{Rem_2}\begin{enumerate}
\item Similarly, we prove that we have either $(|x_{4n-2}|)_n$
diverges to $\infty$ and $(x_{4n})_n$ converges to zero or
$(x_{4n-2})_n$ converges to zero and $(|x_{4n}|)_n$ diverges to
$\infty$. \item Note that in the case $A=-\alpha$, the initial
conditions intervene in the nature of the subsequences of
$(x_n)_{n=-3}^\infty$, therefore in the nature of the solution
$(x_n)_{n=-3}^\infty$ itself.
\end{enumerate}
\end{rem}
\subsection{The case $|\frac{A}{\alpha}|<1$}
\begin{thm}\label{Th_6}
If $|\frac{A}{\alpha}|<1$, then for every solution
$(x_n)_{n=-3}^\infty$ of the equation \eqref{e_1}, the
subsequences $(x_{4n-3})_n$, $(x_{4n-1})_n$, $(x_{4n-2})_n$ and
$(x_{4n})_n$ converge.
\end{thm}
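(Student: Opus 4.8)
The plan is to read off convergence directly from the closed forms in Corollary~\ref{c_1}. Since $|A/\alpha|<1$ we have $\alpha\neq 0$ and $A\neq\alpha$, so Corollary~\ref{c_1} applies; moreover $B\neq 0$ and $abcd\neq 0$ (Remark~\ref{Rem_1}), hence $Bbd\neq 0$ and $Bac\neq 0$. Put $r=A/\alpha$, so $0\leq |r|<1$, and $\lambda=(A-\alpha+Bbd)/(Bbd)$. I will carry out $(x_{4n-3})_n$ in detail; the subsequence $(x_{4n-1})_n$ is entirely analogous, and $(x_{4n-2})_n,(x_{4n})_n$ are obtained by replacing $(b,d)$ by $(a,c)$ (so $\lambda$ by $\lambda'=(A-\alpha+Bac)/(Bac)$).

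The first step is a normalization of each factor: $P_{p,A,\alpha}^{B,b,d}=A^{p}(A-\alpha+Bbd)-Bbd\,\alpha^{p}=-Bbd\,\alpha^{p}\,(1-\lambda r^{p})$. Substituting this into
$$x_{4n-3}=\frac{d\alpha^{n}(A-\alpha)\prod_{p=0}^{n-2}P_{2p+2,A,\alpha}^{B,b,d}}{\prod_{p=0}^{n-1}P_{2p+1,A,\alpha}^{B,b,d}}$$
and separating the powers of $\alpha$ and of $-Bbd$, I expect the powers of $\alpha$ to cancel exactly, since $n+\sum_{p=0}^{n-2}(2p+2)=n^{2}=\sum_{p=0}^{n-1}(2p+1)$, while the $n-1$ factors $-Bbd$ in the numerator against the $n$ in the denominator leave only $1/(-Bbd)$. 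Thus
$$x_{4n-3}=\frac{d(A-\alpha)}{-Bbd}\cdot\frac{\prod_{p=0}^{n-2}(1-\lambda r^{2p+2})}{\prod_{p=0}^{n-1}(1-\lambda r^{2p+1})},\qquad x_{4n-1}=b\cdot\frac{\prod_{p=0}^{n-1}(1-\lambda r^{2p+1})}{\prod_{p=0}^{n-1}(1-\lambda r^{2p+2})},$$
the second identity coming from the same bookkeeping applied to the corresponding formula of Corollary~\ref{c_1}.

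The second step is convergence of these ratios. Since $|r|<1$ we have $\sum_{p\geq 0}|\lambda|\,|r|^{p}<\infty$, so $\prod_{p\geq 0}(1-\lambda r^{2p+1})$ and $\prod_{p\geq 0}(1-\lambda r^{2p+2})$ are absolutely convergent infinite products. Because $(x_n)_{n=-3}^{\infty}$ is by hypothesis a solution, all denominators in Corollary~\ref{c_1} are nonzero, i.e.\ $P_{p,A,\alpha}^{B,b,d}\neq 0$ for every $p\geq 1$, equivalently $1-\lambda r^{p}\neq 0$ for every $p\geq 1$; hence neither infinite product has a vanishing factor, so both converge to finite nonzero limits. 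Dividing the corresponding partial products, $(x_{4n-3})_n$ and $(x_{4n-1})_n$ converge; the same argument with $(a,c)$ in place of $(b,d)$ settles $(x_{4n-2})_n$ and $(x_{4n})_n$. (A shortcut avoiding the exponent bookkeeping: from Corollary~\ref{c_1}, $x_{4(n+1)-3}/x_{4n-3}=\alpha P_{2n}/P_{2n+1}=(1-\lambda r^{2n})/(1-\lambda r^{2n+1})=1+O(r^{2n})$, and an infinite product with such a general term converges.)

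The computations are all routine; the only point requiring care is keeping the exponents of $\alpha$ and of $Bbd$ matched so that they cancel, and observing — from the mere existence of the solution — that the limiting denominators are nonzero, which is what guarantees a genuine finite limit rather than an indeterminate expression.
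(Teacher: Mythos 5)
Your proof is correct and follows essentially the same route as the paper: both start from Corollary~\ref{c_1}, factor each term as $P_{p,A,\alpha}^{B,b,d}=-Bbd\,\alpha^{p}\bigl(1-\tfrac{A-\alpha+Bbd}{Bbd}(\tfrac{A}{\alpha})^{p}\bigr)$ so that the powers of $\alpha$ and of $Bbd$ cancel, and reduce convergence of each subsequence to the convergence of an infinite product whose general term is $1+O\bigl(|A/\alpha|^{2p}\bigr)$. The only (minor, and in fact slightly tidier) difference is that you treat the numerator and denominator products separately via absolute convergence and explicitly note that no factor vanishes, whereas the paper Taylor-expands the ratio $V_p$ and argues via $\sum\ln V_p$.
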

\begin{proof}
Let us prove that $(x_{4n-3})_n$ converge.
\begin{align*}
 x_{4n-3}&=\frac{d\alpha^n(A-\alpha)\prod_{p=0}^{n-2}\Big(A^{2p+2}(A-\alpha+B bd)-B bd\alpha^{2p+2}\Big)}
 {\prod_{p=0}^{n-1}\Big(A^{2p+1}(A-\alpha+B bd)-B
 bd\alpha^{2p+1}\Big)}\\
 &=\frac{d\alpha^n(A-\alpha)\prod_{p=0}^{n-2}-B
bd\alpha^{2p+2}\Big(-\frac{A-\alpha+B
bd}{Bbd}(\frac{A}{\alpha})^{2p+2}+1\Big)}{\prod_{p=0}^{n-1}-B
bd\alpha^{2p+1}\Big(-\frac{A-\alpha+B
bd}{Bbd}(\frac{A}{\alpha})^{2p+1}+1\Big)}\\
&=\frac{d\alpha^n(A-\alpha)\prod_{p=0}^{n-2}-B
bd\alpha^{2p+2}\prod_{p=0}^{n-2}\Big(1-\frac{A-\alpha+B
bd}{Bbd}(\frac{A}{\alpha})^{2p+2}\Big)}{\prod_{p=0}^{n-1}-B
bd\alpha^{2p+1}\prod_{p=0}^{n-1}\Big(1-\frac{A-\alpha+B
bd}{Bbd}(\frac{A}{\alpha})^{2p+1}\Big)}\\
&=\frac{d\alpha^n(A-\alpha)\prod_{p=0}^{n-2}\alpha^{2p+1}\alpha^{n-1}\prod_{p=0}^{n-2}\Big(1-\frac{A-\alpha+B
bd}{Bbd}(\frac{A}{\alpha})^{2p+2}\Big)}{-B
bd\prod_{p=0}^{n-2}\alpha^{2p+1}\alpha^{2n-1}\prod_{p=0}^{n-1}\Big(1-\frac{A-\alpha+B
bd}{Bbd}(\frac{A}{\alpha})^{2p+1}\Big)}\\
&=\frac{\alpha-A}{B b\Big(1-\frac{A-\alpha+B
bd}{Bbd}(\frac{A}{\alpha})^{2n-1}\Big)}\prod_{p=0}^{n-2}\Big(\frac{1-\frac{A-\alpha+B
bd}{Bbd}(\frac{A}{\alpha})^{2p+2}}{1-\frac{A-\alpha+B
bd}{Bbd}(\frac{A}{\alpha})^{2p+1}}\Big)
\end{align*}
 Denote $\gamma=\frac{A-\alpha+B bd}{Bbd}$,  $\lambda=\frac{A}{\alpha}$ and
  $V_p=\frac{1-\gamma\lambda^{2p+2}}{1-\gamma\lambda^{2p+1}}$.
Using these notations, we obtain
$$x_{4n-3}=\frac{\alpha-A}{Bb(1-\gamma\lambda^{2n-1})}\prod_{p=0}^{n-2}V_p$$
Since $|\lambda|<1$, then the sequence
$\Big(\frac{\alpha-A}{Bb(1-\gamma\lambda^{2n-1})}\Big)_n$
converges toward $\frac{\alpha-A}{Bb}$. By the Taylor expansion,
we obtain
$$V_p=\frac{1-\gamma\lambda^{2p+2}}{1-\gamma\lambda^{2p+1}}=
(1-\gamma\lambda^{2p+2})(1+\gamma\lambda^{2p+1}+o(\lambda^{2p+1}))=1+\gamma(1-\lambda)\lambda^{2p+1}+o(\lambda^{2p+1}).$$
Then $\ln(V_p)\sim \gamma(1-\lambda)\lambda^{2p+1}$, which is the
general term of a convergent series, then the sequence
$(x_{4n-3})_n$ is convergent. Similarly, one can prove that the
other subsequences converge.
\end{proof}
\begin{rem}\noindent Sequences $(x_{4n-3})_n$ and $(x_{4n-1})_n$ are related by
the equations:
\begin{equation}\label{e_4}x_{4(n+1)-3}=\frac{\alpha x_{4n-3}}{A+B x_{4n-1}x_{4n-3}},\end{equation}
and
\begin{equation}\label{e_5}x_{4(n+1)-1}=\frac{\alpha x_{4n-1}}{A+B x_{4(n+1)-3}x_{4n-1}}.\end{equation}
Denote by $l_{3}$, $l_{2}$, $l_{1}$ and $l_{0}$ respectively, the
limits of the subsequences   $(x_{4n-3})_n$, $(x_{4n-2})_n$,
$(x_{4n-1})_n$ and $(x_{4n})_n$ .

Passing to the limit as $n$ goes to infinity in the equation
\eqref{e_4}, we obtain:

$l_{3}=\frac{\alpha l_{3}}{A+Bl_{3}l_{1}}$, then
$$(S_1)\,:\begin{cases}
l_{3}=0,\\
or\\
l_{1}=\frac{\alpha-A}{Bl_{3}}.
\end{cases}$$
Passing to the limit as $n$ goes to infinity in the equation
\eqref{e_5}, we obtain:

$l_{1}=\frac{\alpha l_{1}}{A+Bl_{3}l_{1}}$, then
$$(S_2)\,:\begin{cases}
l_{1}=0,\\
or\\
l_{3}=\frac{\alpha-A}{Bl_{1}}.
\end{cases}$$
Combining systems $(S_1)$ and $(S_2)$, since $\alpha \neq A$, we
obtain:

either $$l_{3}=l_{1}=0$$ \qquad \qquad \qquad or\quad
$$(S):\,\begin{cases}
l_{3}=\frac{\alpha-A}{Bl_{1}},\\
and\\
l_{1}=\frac{\alpha-A}{Bl_{3}}.
\end{cases}$$
The proposition $l_{3}=l_{1}=0$ contradicts the fact that the
infinite product $\prod_{p\geq 0}V_p$ converges, in fact if
$\lim_{n\rightarrow \infty}\prod_{p=0}^{n}V_p=0$, then
$\lim_{n\rightarrow \infty}\sum_{p=0}^{n}\ln(V_p)=-\infty$ and
this contradicts the fact that the series $\sum_{p\geq 0}\ln(V_p)$
converges. Hence the only possibility is that $l_{1}\neq 0$,
$l_{3}\neq 0$ and $(S):\,\begin{cases}
l_{3}=\frac{\alpha-A}{Bl_{1}},\\
and\\
l_{1}=\frac{\alpha-A}{Bl_{3}}.
\end{cases}$

In fact $l_{3}=\frac{\alpha-A}{Bl_{1}}$ is equivalent to
$l_{1}=\frac{\alpha-A}{Bl_{3}}$, then $(S)$ is equivalent to
$l_{3}=\frac{\alpha-A}{Bl_{1}}$. Consider the function $f$ defined
on $\mathbb{R}^*$ as $f(x)=\frac{\alpha-A}{Bx}$, we have $fof=Id$
and, $l_{1}$ and $l_{3}$ are related by $f(l_{1})=l_{3}$.
Similarly, we prove that $l_{0}$ and $l_{2}$ are related by the
relation $f(l_{0})=l_{2}$.
 \end{rem}
\section{Periodicity}
\begin{defn}\label{defp}
A solution $(x_n)_{n=-3}^\infty$ of \eqref{e_1} is called periodic
with period $p$ if there exists an integer $p$, such that
\begin{equation}\label{e_p} x_{n+p} = x_n,\:\:\forall n\geq -3 \end{equation}
 A solution is called periodic with prime period $p$ if $p$ is
the smallest positive integer for which \eqref{e_p}
holds.\end{defn}
 In the sequel, we need
the following lemma, \cite{gla}, which describes when a solution
of $\eqref {e_1}$ converges to a periodic solution of $\eqref
{e_1}$.
\begin{lem}\label{lemp}
Let $(x_n)_{n=-3}^\infty$ be a solution of \eqref{e_1}. Suppose
there exist real numbers $l_3,l_2,l_1,l_0$ such that
$$\lim_{n\longrightarrow +\infty}x_{4n+j}=l_j\:\: \mbox{for
all}\:\:j=-3,\ldots ,0$$ Let $(y_n)_{n=-3}^\infty$ be the
period-$4$ sequence of real numbers such that
$$y_{4n+j}=l_j\:\: \mbox{for
all}\:\:j\geq-3$$ Then the following statements are true:
\begin{enumerate}
\item $(y_n)_{n=-3}^\infty$ is a period-4 solution of \eqref{e_1}.
\item $\lim_{n\longrightarrow +\infty}x_{4n+j}=y_j \mbox{ for all
}\:\:j\geq-3$ \end{enumerate}
\end{lem}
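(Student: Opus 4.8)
The plan is to prove both statements essentially by passing to the limit in the recurrence. First I would establish statement (1). By definition $(y_n)_{n=-3}^\infty$ is the period-$4$ sequence determined by $y_{-3}=l_3$, $y_{-2}=l_2$, $y_{-1}=l_1$, $y_0=l_0$. To check that it solves \eqref{e_1}, fix $n\geq -3$ and write $j\in\{-3,-2,-1,0\}$ with $n\equiv j \pmod 4$; it suffices to verify the four identities
\begin{equation}\label{eq:lim-rel}
l_3=\frac{\alpha l_3}{A+Bl_1l_3},\quad l_0=\frac{\alpha l_2}{A+Bl_2l_0},\quad l_1=\frac{\alpha l_1}{A+Bl_3l_1},\quad l_2=\frac{\alpha l_0}{A+Bl_0l_2},
\end{equation}
since periodicity of $(y_n)$ then propagates the recurrence to every index. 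These relations follow by taking $n\to\infty$ in \eqref{e_1} along each of the four residue classes: for instance, along $n\equiv 1 \pmod 4$ one has $x_{n+1}=x_{4m+1}$, $x_{n-1}=x_{4m-1}$, $x_{n-3}=x_{4m-3}$ for suitable $m$, and letting $m\to\infty$ gives $l_3 = \alpha l_3/(A+Bl_1l_3)$; the other three are identical in spirit. A small point to handle here is that the denominators $A+Bl_1l_3$ and $A+Bl_0l_2$ must be nonzero for the limiting fractions to make sense: this is automatic because the denominators $A+Bx_{n-1}x_{n-3}$ along the solution are bounded away from $0$ near the limit (otherwise $x_{n+1}$ would blow up, contradicting convergence), so the limit of the denominator is a nonzero number equal to $A+Bl_1l_3$ (resp.\ $A+Bl_0l_2$).

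For statement (2): since $y_{4n+j}=l_j$ for all $n$ and $j\geq -3$ by construction, the claim $\lim_{n\to\infty}x_{4n+j}=y_j$ is just a restatement of the hypothesis $\lim_{n\to\infty}x_{4n+j}=l_j$. So (2) is immediate once the sequence $(y_n)$ has been correctly set up; the only content is the observation that the four subsequential limits assemble into the period-$4$ pattern, which is exactly the definition of $(y_n)$.

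The main obstacle — and it is a mild one — is the justification that one may legitimately pass to the limit inside the fraction in \eqref{e_1}, i.e.\ that $A+Bl_{j-1}l_{j-3}\neq 0$ for the relevant residues. I would dispose of this by the boundedness-of-denominator argument sketched above: if $A+Bx_{n-1}x_{n-3}\to 0$ along some residue class while $x_{n-3}\to l_{j-3}$, then either $\alpha l_{j-3}=0$ (handled separately, forcing $l_j=0$ consistently) or $|x_{n+1}|\to\infty$, contradicting the assumed convergence. Everything else is bookkeeping: splitting $n$ by residue mod $4$, matching indices, and invoking the algebra of limits. Note that Lemma \ref{lemp} is attributed to \cite{gla}, so in the paper it may simply be cited; the sketch above indicates why it holds.
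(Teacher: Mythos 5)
The paper itself gives no proof of Lemma \ref{lemp}: it is imported from \cite{gla} and used as a black box, so there is no in-paper argument to compare yours against. Your route --- pass to the limit in \eqref{e_1} along each residue class mod $4$, verify that the four limits satisfy the resulting limiting identities, note that periodicity of $(y_n)$ propagates these to all indices, and observe that statement (2) is immediate from the definition of $(y_n)$ --- is the standard proof of this lemma and is essentially sound, including your attention to the denominator $A+Bx_{n-1}x_{n-3}$ being bounded away from zero before passing to the limit.

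One concrete slip: in your display of the four limiting identities, the second and fourth have the wrong numerators. Since $x_{n+1}$ and $x_{n-3}$ differ by $4$ in index they lie in the same residue class, so every identity must have the shape $l_j=\alpha l_j/(A+Bl_{j'}l_j)$; the correct versions are $l_0=\frac{\alpha l_0}{A+Bl_2l_0}$ and $l_2=\frac{\alpha l_2}{A+Bl_0l_2}$, not $l_0=\frac{\alpha l_2}{A+Bl_2l_0}$ and $l_2=\frac{\alpha l_0}{A+Bl_0l_2}$. As written those two equations do not follow from the recurrence and are false in general; the limiting procedure you describe produces the correct ones, so this is a transcription error rather than a flaw of method, but it must be fixed. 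A smaller point: in the degenerate branch $\alpha l_{j}=0$ of your denominator argument, the paper's standing assumption $\alpha\neq 0$ forces $l_j=0$, whence $A+Bl_{j'}l_j=A$, so the denominator can only vanish in the limit when $A=0$; it would be cleaner to dispose of that case explicitly (for $A=0$ the equation gives $x_{n+1}x_{n-1}=\alpha/B$, so no subsequence of a solution can tend to $0$ and the branch is vacuous) than to say it is ``handled separately.''
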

Now, the field is ready to state the following theorem:
\begin{thm}\noindent
\begin{enumerate}
\item \label{it1}If $|\frac{A}{\alpha}|>1$, $A-\alpha+Bbd\neq 0$
and $A-\alpha+Bac\neq 0$, then the equation \eqref{e_1} has no
periodic-p solution, for all $p\geq 2$.
 \item If $|\frac{A}{\alpha}|>1$, $A-\alpha+Bbd= 0$ and
$A-\alpha+Bac=0$, then the equation \eqref{e_1} has a periodic-4
solution.
 \item If $|\frac{A}{\alpha}|= 1$ then the equation \eqref{e_1} has no
periodic-p solution, for all $p\geq 2$.
 \item If $|\frac{A}{\alpha}|<1$ then the equation \eqref{e_1} has a
periodic-4 solution.
\end{enumerate}
\end{thm}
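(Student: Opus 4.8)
The plan is to dispatch the four cases using the convergence and boundedness results of Section~3 together with Lemma~\ref{lemp}, with essentially no new computation.

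For item~(1) I would simply quote Theorem~\ref{Th_3}(1): under those hypotheses every solution of \eqref{e_1} converges to $0$. A sequence that is periodic with some period $p$ and also convergent must be constant and equal to its limit, so a periodic-$p$ solution would be identically $0$, contradicting the standing hypothesis $x_0=a\neq0$ (equivalently, that every term of the solution is nonzero, by the formulas of Section~2). Hence no periodic-$p$ solution exists for any $p\ge2$. The case $A=\alpha$ of item~(3) is handled word for word the same way, invoking Theorem~\ref{Th_4} in place of Theorem~\ref{Th_3}(1). For the remaining case $A=-\alpha$ of item~(3) I would instead use Theorem~\ref{Th_5}: every solution is unbounded, whereas a periodic-$p$ solution takes only finitely many values and is therefore bounded, a contradiction. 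This settles items~(1) and~(3).

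For item~(2), the computation carried out inside the proof of Theorem~\ref{Th_3}(2) already shows that when $A-\alpha+Bbd=0$ and $A-\alpha+Bac=0$ the four subsequences are constant, $x_{4n-3}\equiv d$, $x_{4n-2}\equiv c$, $x_{4n-1}\equiv b$, $x_{4n}\equiv a$, so the solution is periodic with period $4$. To make the existence claim concrete I would choose nonzero $a,b,c,d$ with $bd=ac=\frac{\alpha-A}{B}$, which is legitimate since $|A/\alpha|>1$ forces $A\neq\alpha$; then the denominator $A+Bx_{n-1}x_{n-3}$ equals $\alpha\neq0$ at every step, so this period-$4$ sequence is a genuine solution of \eqref{e_1}.

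For item~(4), note first that $|A/\alpha|<1$ forces $A\neq\alpha$. Theorem~\ref{Th_6} then provides real numbers $l_{-3},l_{-2},l_{-1},l_0$ with $\lim_{n\to\infty}x_{4n+j}=l_j$ for $j=-3,\dots,0$, and by the Remark following it these limits are nonzero and linked by the relations of system $(S)$. Feeding them into Lemma~\ref{lemp}, the period-$4$ sequence $(y_n)_{n=-3}^\infty$ defined by $y_{4n+j}=l_j$ is a period-$4$ solution of \eqref{e_1}; hence \eqref{e_1} has a periodic-$4$ solution. I do not anticipate a real obstacle anywhere: the analytic work was done in Section~3, and the only points needing care are the two bookkeeping checks above, namely that the constant subsequences in~(2) actually solve \eqref{e_1}, and that the hypothesis of Lemma~\ref{lemp}, existence of all four subsequential limits, is precisely what Theorem~\ref{Th_6} yields in~(4).
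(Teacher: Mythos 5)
Your proposal is correct and follows essentially the same route as the paper: items (1) and (3) via the convergence/unboundedness theorems of Section~3, and items (2) and (4) via the subsequential limits and Lemma~\ref{lemp}. The only (harmless) deviations are that in item (2) you observe directly that the solution itself has constant subsequences and verify well-definedness, where the paper instead passes through Lemma~\ref{lemp}, and that you spell out the small step the paper leaves implicit in items (1) and (3), namely that a periodic convergent sequence is constant.
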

\begin{proof}\noindent
\begin{enumerate}
\item If $|\frac{A}{\alpha}|>1$, $A-\alpha+Bbd\neq 0$ and
$A-\alpha+Bac\neq 0$, then, by theorem \eqref{Th_3}, every
solution of \eqref{e_1} converges to 0, hence, the solution is not
allowed to be periodic. \item If $|\frac{A}{\alpha}|>1$,
$A-\alpha+Bbd= 0$ and $A-\alpha+Bac=0$, then, by theorem
\eqref{Th_3},$\lim_{n\longrightarrow +\infty}x_{4n-3}=d$,
$\lim_{n\longrightarrow +\infty}x_{4n-2}=c$,
$\lim_{n\longrightarrow +\infty}x_{4n-1}=b$,
$\lim_{n\longrightarrow +\infty}x_{4n}=a$. Applying the lemma
\eqref{lemp}, the sequence: $d,c,b,a,d,c,b,a\ldots $ is a
periodic-4 solution.
 \item The case $A=\alpha$ is similar to \eqref{it1}. Suppose that  $A=-\alpha$, by contradiction,  assume that the
 equation \eqref{e_1} has a periodic-p solution
 $(x_n)_{n=-3}^\infty$. Let $x_{n+1},\ldots , x_{n+p}$ be a $p$
 consecutive values of this solutions and let $M=\max\{x_{n+1},\ldots ,
 x_{n+p}\}$, it follows that for all $k\geq-3$, $x_k\leq M$.
 Contradiction with theorem \eqref{Th_5}.
 \item If $|\frac{A}{\alpha}|<1$ then, by theorem \eqref{Th_6}, there exist real numbers $l_3,l_2,l_1,l_0$ such that
$$\lim_{n\longrightarrow +\infty}x_{4n+j}=l_j\:\: \mbox{for
all}\:\:j=-3,\ldots ,0$$ Applying lemma \eqref{lemp}, the sequence
$l_3,l_2,l_1,l_0,l_3,l_2,l_1,l_0\ldots$ is a periodic-4 solution
of \eqref{e_1}.
\end{enumerate}
\end{proof}
\begin{rem}\label{remp}From the previous proof, we
deduce:\begin{enumerate}\item If $|\frac{A}{\alpha}|>1$,
$A-\alpha+Bbd= 0$ and $A-\alpha+Bac=0$, $d=b$, $c=a$ and $b\neq c$
then \eqref{e_1} has a 2 prime periodic solution $a,b,a,b\ldots$.
 \item If $|\frac{A}{\alpha}|<1$, $A-\alpha+Bbd = 0$ and
$A-\alpha+Bac = 0$ then, from the proof of theorem \eqref{Th_6},
we deduce the values of the real numbers $l_3,l_2,l_1,l_0$, thus:
$l_3=\frac{\alpha-A}{Bb}$, $l_2=\frac{\alpha-A}{Ba}$,
$l_1=\frac{\alpha-A}{Bd}$, $l_0=\frac{\alpha-A}{Bc}$.
\end{enumerate}\end{rem}
\section{Examples}
\begin{ex}\label{exp1}
In this example, we illustrate the case $|\frac{A}{\alpha}|>1$,
$A-\alpha+Bbd\neq 0$ and $A-\alpha+Bac\neq 0$, we choose $a=3$;
$b=-4$; $c=2$ ; $d=-1$ ; $B=1$ ; $A=1.05$; $\alpha =1$.\\
\begin{figure}\label{fig1}
\begin{center}
      \includegraphics{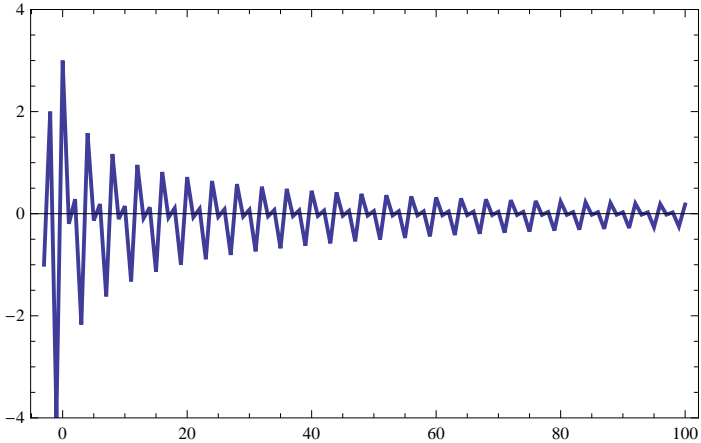}
   \end{center}
   \caption{$|\frac{A}{\alpha}|>1$, the solution is converging to zero}
\end{figure}
We remark in the figure \eqref{fig1} that the solution is
oscillating about zero with a decreasing amplitude. In fact, the
solution has to converge to zero, According to theorem
\eqref{Th_3}.
\end{ex}

\begin{ex}\label{exp2}
In this example, we illustrate the case $|\frac{A}{\alpha}|>1$,
$A-\alpha+Bbd= 0$ and $A-\alpha+Bac = 0$, we choose $a=c=2$;
$b=d=-2$ ; $B=-2$ ; $A=9$; $\alpha =1$.\\
\begin{figure}\label{fig2}
\begin{center}
      \includegraphics{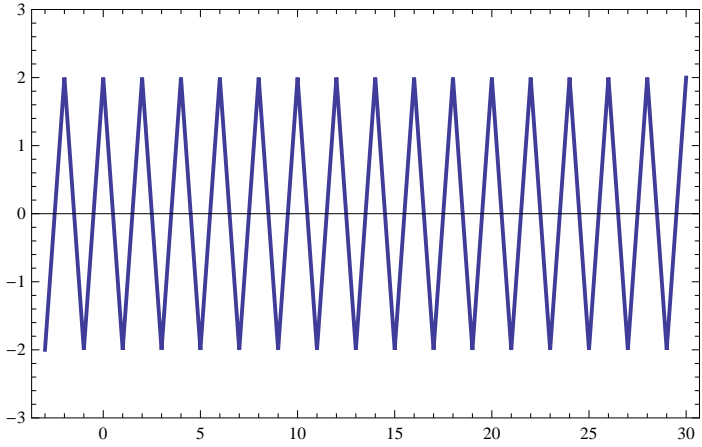}
   \end{center}
   \caption{$|\frac{A}{\alpha}|>1$, 2 prime periodic solution}
\end{figure}
We see, figure \eqref{fig2}, that the obtained solution is a 2
prime periodic solution. This is coherent with remark
\eqref{remp}.
\end{ex}
\begin{ex}\label{exp3}
In this example, we illustrate the case $A=-\alpha$, we choose $a
= 0.1$ ; $b = 0.2$; $c = 0.3$ ; $d = -0.4$ ; $B = 1$ ; $A = 0.5$;
$\alpha = -0.5$\\
\begin{figure}\label{fig3}
\begin{center}
      \includegraphics{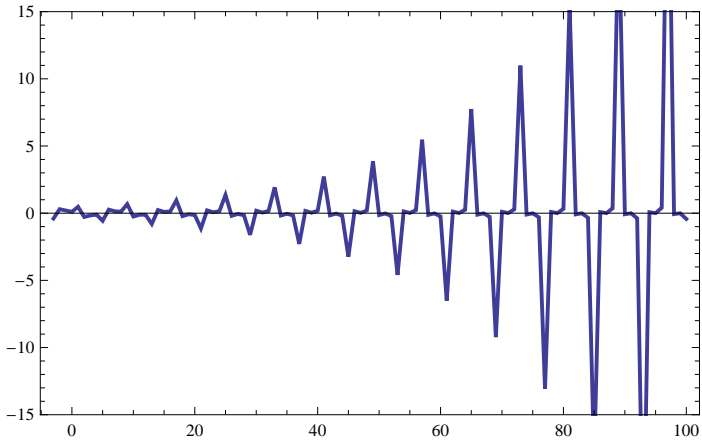}
   \end{center}
   \caption{$A=-\alpha $, the solution is unbounded}
\end{figure}
We remark, figure \eqref{fig3}, that the solution is oscillating
about zero with an increasing amplitude. By theorem \eqref{Th_5},
it is an unbounded solution.
\end{ex}

\begin{ex}\label{exp4}
In this example, we illustrate the case $|\frac{A}{\alpha}|<1$,
$A-\alpha+Bbd\neq 0$ and $A-\alpha+Bac\neq 0$, we choose  $a =
-1.2 ; b = 0.4; c = -0.3 ; d = 0.9 ; B = 1 ; A = 0.64; \alpha =
1$\\
\begin{figure}\label{fig4}
\begin{center}
      \includegraphics{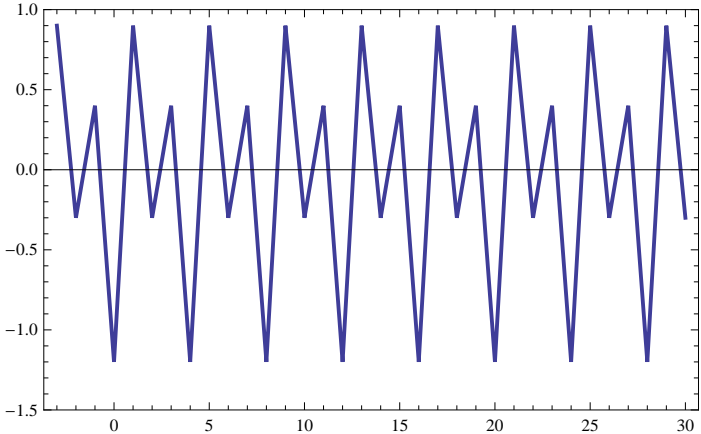}
   \end{center}
   \caption{$|\frac{A}{\alpha}|<1$, 4 prime periodic solution}
\end{figure}
We obtain a 4 prime periodic solution, Figure \eqref{fig4}.
According to the remark \eqref{remp}, the solution is the 4 prime
periodic sequence
$$0.9, 0.4, -0.3, -1.2, 0.9, \ldots$$
\end{ex}
\newpage

\end{document}